\begin{document}
\bibliographystyle{plain}
\title
{ A note on best $n$-term approximation for generalized Wiener classes}
\author{Ronald DeVore, Guergana Petrova,  Przemys{\l}aw Wojtaszczyk}
%
%
%
\hbadness=10000
\vbadness=10000
\newtheorem{lemma}{Lemma}[section]
\newtheorem{prop}[lemma]{Proposition}
\newtheorem{cor}[lemma]{Corollary}
\newtheorem{theorem}[lemma]{Theorem}
\newtheorem{remark}[lemma]{Remark}
\newtheorem{example}[lemma]{Example}
\newtheorem{definition}[lemma]{Definition}
\newtheorem{proper}[lemma]{Properties}
\newtheorem{assumption}[lemma]{Assumption}
%
\newenvironment{disarray}{\everymath{\displaystyle\everymath{}}\array}{\endarray}

\def\RR{\rm \hbox{I\kern-.2em\hbox{R}}}
\def\NN{\rm \hbox{I\kern-.2em\hbox{N}}}
\def\ZZ{\rm {{\rm Z}\kern-.28em{\rm Z}}}
\def\CC{\rm \hbox{C\kern -.5em {\raise .32ex \hbox{$\scriptscriptstyle
|$}}\kern
-.22em{\raise .6ex \hbox{$\scriptscriptstyle |$}}\kern .4em}}
\def\vp{\varphi}
\def\<{\langle}
\def\>{\rangle}
\def\t{\tilde}
\def\i{\infty}
\def\e{\varepsilon}
\def\sm{\setminus}
\def\nl{\newline}
\def\o{\overline}
\def\wt{\widetilde}
\def\wh{\widehat}
\def\cT{{\cal T}}
\def\cA{{\cal A}}
\def\cI{{\cal I}}
\def\cV{{\cal V}}
\def\cB{{\cal B}}
\def\cF{{\cal F}}
\def\cY{{\cal Y}}

\def\cD{{\cal D}}
\def\cP{{\cal P}}
\def\cJ{{\cal J}}
\def\cM{{\cal M}}
\def\cO{{\cal O}}
\def\Chi{\raise .3ex
\hbox{\large $\chi$}} \def\vp{\varphi}
\def\lsima{\hbox{\kern -.6em\raisebox{-1ex}{$~\stackrel{\textstyle<}{\sim}~$}}\kern -.4em}
\def\lsim{\hbox{\kern -.2em\raisebox{-1ex}{$~\stackrel{\textstyle<}{\sim}~$}}\kern -.2em}
\def\[{\Bigl [}
\def\]{\Bigr ]}
\def\({\Bigl (}
\def\){\Bigr )}
\def\[{\Bigl [}
\def\]{\Bigr ]}
\def\({\Bigl (}
\def\){\Bigr )}
\def\L{\pounds}
\def\pr{{\rm Prob}}
\newcommand{\cs}[1]{{\color{magenta}{#1}}}
\def\ds{\displaystyle}
\def\ev#1{\vec{#1}}     
\newcommand{\lt}{\ell^{2}(\nabla)}
\def\Supp#1{{\rm supp\,}{#1}}
\def\R{\mathbb{R}}
\def\E{\mathbb{E}}
\def\nl{\newline}
\def\T{{\relax\ifmmode I\!\!\hspace{-1pt}T\else$I\!\!\hspace{-1pt}T$\fi}}
\def\N{\mathbb{N}}
\def\Z{\mathbb{Z}}
\def\N{\mathbb{N}}
\def\Zd{\Z^d}
\def\Q{\mathbb{Q}}
\def\C{\mathbb{C}}
\def\Rd{\R^d}
\def\gsim{\mathrel{\raisebox{-4pt}{$\stackrel{\textstyle>}{\sim}$}}}
\def\sime{\raisebox{0ex}{$~\stackrel{\textstyle\sim}{=}~$}}
\def\lsim{\raisebox{-1ex}{$~\stackrel{\textstyle<}{\sim}~$}}
\def\div{\mbox{ div }}
\def\M{M}  \def\NN{N}                  
\def\Le{{\ell^1}}            
\def\Lz{{\ell^2}}
\def\Let{{\tilde\ell^1}}     
\def\Lzt{{\tilde\ell^2}}
\def\Ltw{\ell^\tau^w(\nabla)}
\def\t#1{\tilde{#1}}
\def\la{\lambda}
\def\La{\Lambda}
\def\ga{\gamma}
\def\BV{{\rm BV}}
\def\Ga{\eta}
\def\al{\alpha}
\def\cZ{{\cal Z}}
\def\cA{{\cal A}}
\def\cU{{\cal U}}
\def\argmin{\mathop{\rm argmin}}
\def\argmax{\mathop{\rm argmax}}
\def\prob{\mathop{\rm prob}}

\def\cO{{\cal O}}
\def\cA{{\cal A}}
\def\cC{{\cal C}}
\def\cS{{\cal F}}
\def\bu{{\bf u}}
\def\bz{{\bf z}}
\def\bZ{{\bf Z}}
\def\bI{{\bf I}}
\def\cE{{\cal E}}
\def\cD{{\cal D}}
\def\cG{{\cal G}}
\def\cI{{\cal I}}
\def\cJ{{\cal J}}
\def\cM{{\cal M}}
\def\cN{{\cal N}}
\def\cT{{\cal T}}
\def\cU{{\cal U}}
\def\cV{{\cal V}}
\def\cW{{\cal W}}
\def\cL{{\cal L}}
\def\cB{{\cal B}}
\def\cG{{\cal G}}
\def\cK{{\cal K}}
\def\cX{{\cal X}}
\def\cS{{\cal S}}
\def\cP{{\cal P}}
\def\cQ{{\cal Q}}
\def\cR{{\cal R}}
\def\cU{{\cal U}}
\def\bL{{\bf L}}
\def\bl{{\bf l}}
\def\bK{{\bf K}}
\def\bC{{\bf C}}
\def\X{X\in\{L,R\}}
\def\ph{{\varphi}}
\def\D{{\Delta}}
\def\H{{\cal H}}
\def\bM{{\bf M}}
\def\bx{{\bf x}}
\def\bj{{\bf j}}
\def\bG{{\bf G}}
\def\bP{{\bf P}}
\def\bW{{\bf W}}
\def\bT{{\bf T}}
\def\bV{{\bf V}}
\def\bv{{\bf v}}
\def\bt{{\bf t}}
\def\bz{{\bf z}}
\def\bw{{\bf w}}
\def \span{{\rm span}}
\def \meas {{\rm meas}}
\def\rhom{{\rho^m}}
\def\diff{\hbox{\tiny $\Delta$}}
\def\EE{{\rm Exp}}
\def\lll{\langle}
\def\argmin{\mathop{\rm argmin}}
\def\codim{\mathop{\rm codim}}
\def\rank{\mathop{\rm rank}}

\def\argmax{\mathop{\rm argmax}}
\def\dJ{\nabla}
\newcommand{\ba}{{\bf a}}
\newcommand{\bb}{{\bf b}}
\newcommand{\bc}{{\bf c}}
\newcommand{\bd}{{\bf d}}
\newcommand{\bs}{{\bf s}}
\newcommand{\bff}{{\bf f}}
\newcommand{\bp}{{\bf p}}
\newcommand{\bg}{{\bf g}}
\newcommand{\by}{{\bf y}}
\newcommand{\br}{{\bf r}}
\newcommand{\be}{\begin{equation}}
\newcommand{\ee}{\end{equation}}
\newcommand{\bea}{$$ \begin{array}{lll}}
\newcommand{\eea}{\end{array} $$}
\def \Vol{\mathop{\rm  Vol}}
\def \mes{\mathop{\rm mes}}
\def \Prob{\mathop{\rm  Prob}}
\def \exp{\mathop{\rm    exp}}
\def \sign{\mathop{\rm   sign}}
\def \sp{\mathop{\rm   span}}
\def \rad{\mathop{\rm   rad}}
\def \vphi{{\varphi}}
\def \csp{\overline \mathop{\rm   span}}

\def\beginproof{\noindent{\bf Proof:}~ }
\def\endproof{\hfill\rule{1.5mm}{1.5mm}\\[2mm]}

\newenvironment{Proof}{\noindent{\bf Proof:}\quad}{\endproof}

\renewcommand{\theequation}{\thesection.\arabic{equation}}
\renewcommand{\thefigure}{\thesection.\arabic{figure}}

\makeatletter
\@addtoreset{equation}{section}
\makeatother

\newcommand\abs[1]{\left|#1\right|}
\newcommand\clos{\mathop{\rm clos}\nolimits}
\newcommand\trunc{\mathop{\rm trunc}\nolimits}
\renewcommand\d{d}
\newcommand\dd{d}
\newcommand\diag{\mathop{\rm diag}}
\newcommand\dist{\mathop{\rm dist}}
\newcommand\diam{\mathop{\rm diam}}
\newcommand\cond{\mathop{\rm cond}\nolimits}
\newcommand\eref[1]{{\rm (\ref{#1})}}
\newcommand{\iref}[1]{{\rm (\ref{#1})}}
\newcommand\Hnorm[1]{\norm{#1}_{H^s([0,1])}}
\def\int{\intop\limits}
\renewcommand\labelenumi{(\roman{enumi})}
\newcommand\lnorm[1]{\norm{#1}_{\ell^2(\Z)}}
\newcommand\Lnorm[1]{\norm{#1}_{L_2([0,1])}}
\newcommand\LR{{L_2(\R)}}
\newcommand\LRnorm[1]{\norm{#1}_\LR}
\newcommand\Matrix[2]{\hphantom{#1}_#2#1}
\newcommand\norm[1]{\left\|#1\right\|}
\newcommand\ogauss[1]{\left\lceil#1\right\rceil}
\newcommand{\QED}{\hfill
\raisebox{-2pt}{\rule{5.6pt}{8pt}\rule{4pt}{0pt}}%
  \smallskip\par}
\newcommand\Rscalar[1]{\scalar{#1}_\R}
\newcommand\scalar[1]{\left(#1\right)}
\newcommand\Scalar[1]{\scalar{#1}_{[0,1]}}
\newcommand\Span{\mathop{\rm span}}
\newcommand\supp{\mathop{\rm supp}}
\newcommand\ugauss[1]{\left\lfloor#1\right\rfloor}
\newcommand\with{\, : \,}
\newcommand\Null{{\bf 0}}
\newcommand\bA{{\bf A}}
\newcommand\bB{{\bf B}}
\newcommand\bR{{\bf R}}
\newcommand\bD{{\bf D}}
\newcommand\bE{{\bf E}}
\newcommand\bF{{\bf F}}
\newcommand\bH{{\bf H}}
\newcommand\bU{{\bf U}}
\newcommand \A {{\bb A}}
\newcommand\cH{{\cal H}}
\newcommand\sinc{{\rm sinc}}
\def\enorm#1{| \! | \! | #1 | \! | \! |}

\newcommand{\am}{a_{\min}}
\newcommand{\aM}{a_{\max}}

\newcommand{\dm}{\frac{d-1}{d}}

\let\bm\bf
\newcommand{\bbeta}{{\mbox{\boldmath$\beta$}}}
\newcommand{\bal}{{\mbox{\boldmath$\alpha$}}}
\newcommand{\bbi}{{\bm i}}

\def\nnew{\color{Red}}
\def\mnew{\color{Blue}}
\def\wnew{\color{magenta}}
\def\gnew{\color{green}}

\newcommand{\dI}{\Delta}
\newcommand\aconv{\mathop{\rm absconv}}

\maketitle
\date{}
\begin{abstract}
We determine  the  best $n$-term approximation  of  generalized   Wiener model classes   in a Hilbert  space $H $. This theory is then applied to several special cases.
\end{abstract}
\section{Introduction}
\label{sec:1}
 One of the main themes in approximation theory is  to prove theorems on how well functions can be approximated
in a Banach space norm $\|\cdot\|_X$ by methods of linear or nonlinear approximation.   
The present paper is exclusively concerned with approximation in a separable Hilbert space  $H$ equipped  with  norm $\|\cdot\|$, induced by a scalar product  $\langle\cdot,\cdot\rangle$.    Let  ${\cal D}:=\{\phi_i, \,i\in\mathbb N\}$ be an orthonormal basis for $H$.
This means that any function $f\in H$ has the unique representation
$$
f=\sum_{j=1}^\infty f_j\phi_j,\quad \hbox{where}\quad \|f\|^2=\sum_{j=1}^\infty |f_j|^2.
$$
We are concerned with $n$ term approximation of the elements $f\in H$. We denote by  $\Sigma_n:=\{S=\sum_{j\in \Lambda}c_je_j:\,\Lambda\subset \mathbb N, \,|\Lambda|=n\}$
and  let 
$$
\sigma_n(f):=\inf_{S\in\Sigma_n}\|f-S\|,\quad n\ge 1,
$$
be the {\it error of $n$-term approximation} of $f$.  Given any $f\in H$, a best $n$-term approximation $S_n$ of $f$ is given by
$$
S_n=S_n(f):=\sum_{j\in\Lambda_n} f_j\phi_j,
$$
where $\Lambda_n:=\Lambda_n(f)$ is a set of $n$ indices $j$ for which $|f_j|\ge |f_i|$ whenever $j\in\Lambda$ and $i\notin\Lambda$.  Even though the set $\Lambda_n(f)$ is not uniquely defined because of possible ties in terms of the size of the absolute values of the  coefficients $f_j$,  the error of approximation $\sigma_n(f)$ is uniquely defined.

We are interested in model classes $K\subset H$ that are given by imposing conditions on the coefficients $(f_j)$ of $f$. For such sets $K$, we define
\begin{equation}
\label{ntermK}
\sigma_n(K):=\sup_{f\in K} \sigma_n(f)
\end{equation}
and we are interested in  the asymptotic decay of $\sigma_n(K)\to 0$ as $n\to \infty$.

Since we are only considering the approximation to take place in $H$, in going further it is sufficient to consider only the case
$$
H=\ell_2:=\left\{{\bf x}=(x_1, x_2, \ldots,):\,\|x\|^2_{\ell_2}:=\sum_{j=1}^\infty |x_j|^2<\infty\right\}.
$$
A classical result in this case is the following.  Let $ 0<p<2$,  and consider the unit ball in $\ell_p$,
$$
K=U(\ell_p):=\left\{{\bf x}=(x_1, x_2, \ldots,):\,\|x\|^p_{\ell_p}:=\sum_{j=1}^\infty |x_j|^p\leq 1\right\}\subset \ell_2.
$$
It is known in this case  that
\begin{equation}
\label{nellp}
\sigma_n(U(\ell_p))\asymp n^{-1/p+1/2},\quad n\to\infty,
\end{equation}
with absolute constants in the equivalency\footnote{We use the notation $A\asymp B$ when there are absolute constants$C_1,C_2>0$such that  we have $C_1B\leq A\leq C_2B$} .  This result is attributed to Stechkin \cite{S}.  

Other results of the above type have been frequently obtained in the literature.  To describe a general setting, let 
$$
{\bf w}:=(w_j)_{j\in\mathbb N},\quad 1\le w_1\le w_2\le \dots,
$$
be a monotonically nondecreasing  sequence of positive  weights. We consider the weighted $\ell_p$
space $\ell_p({\bf w})$ defined  as the set of all real valued sequences ${\bf x}\in\ell_2$ such that
$$
\ell_p({\bf w}):=\{{\bf x}\in\ell_2:\,\|{\bf x}\|_{\ell_p({\bf w})}<\infty\},\quad 0<p\le \infty,
$$
where
\begin{eqnarray}
\label{weightedlp}
\|{\bf x}\|_{\ell_p({\bf w})}:=
\begin{cases}
 \left [\sum_{j=1}^\infty  |w_jx_j|^p\right ]^{1/p}, \quad 0<p<\infty,\\\\
\sup_j|w_jx_j|, \quad \quad\quad\quad p=\infty.
\end{cases}
\end{eqnarray} 

 We denote by $U(\ell_p({\bf w}))$ the unit ball of this class
 $$
 U(\ell_p({\bf w}))):=\{{\bf x}\in\ell_p({\bf w}):\,\|{\bf x}\|_{\ell_p({\bf w})}\leq 1\}
 $$
and derive the rate of  the error  $\sigma_n(U_p({\bf w}))$, see   Theorem \ref{L:optimal}.   It can happen that $\sigma_n(U_p({\bf w}))$ is infinite for all $n$.  Also, note that the class $\ell_p({\bf w})$ is different 
from the classical weighted sequence spaces since the weights in the $\ell_p({\bf w})$ norm are  raised to a power.
 The problem considered here has been investigated in the context of best $n$-term approximation of diagonal operators in the general case of approximation in $\ell_q$. The works of Stepanets, 
see \cite{S2,S3,S4},  determine the rate of $\sigma_n(U(\ell_p({\bf w}))))_{\ell_q}$  for general $0<q\leq \infty$  under the restriction $\lim_{k\to\infty}w_k=\infty$. Recently,  in   \cite{NN}, based on results from \cite{G}, 
 the condition on the sequence ${\bf w}$ in the case $0<p<q$ has been removed, see Theorem 2.1(i). The case   $q<p<\infty$ has also been considered, but under additional assumptions on the weight sequence ${\bf w}$, see 
 Theorem 2.1(ii). 
 In this paper, we consider only the case $q=2$ and provide a simple, different unified approach for  finding the rate of  $\sigma_n(U(\ell_p({\bf w}))))_{\ell_2}$ for all $0<p\leq \infty$ with no restriction on the weight ${\bf w}$.

 We then go on to apply our result in the case of several special weights  
  $$
w_j=j^{\alpha}(1+\log j)^{\beta},  \quad j\in \mathbb N,
 $$
   provided  $\alpha>0, \,\beta\in\mathbb R$, or $\alpha=0, \,\beta\geq 0$.
 We show  in Corollary 
\ref{C2} that when  $\alpha=0$, $\beta\geq 0$,
$$
\sigma_n( U(\ell_p({\bf w}))\asymp n^{-(1/p-1/2)}[\log n]^{-\beta}, \quad n>1, \quad 0<p<2.
$$
In the case $\alpha=\beta=0$, this  recovers Stechkin's result (\ref{nellp}). In Corollary 
\ref{C3}, we consider the case $\alpha>0$, $\beta\in\mathbb R$,  $0<p\leq \infty$ and show that 
$$
 U(\ell_p({\bf w})))\asymp n^{-(\alpha+1/p-1/2)}[\log n]^{-\beta}, \quad  n>1, \quad \alpha+1/p-1/2>0. 
$$

 We  call  $\ell_p({\bf w})$  a {\it generalized Wiener class} in analogy with the definition of Wiener spaces in Fourier analysis when $H=L_2([0,1])$ and $\phi_j$ is the Fourier basis.  
 Our results have some overlap with the study of Wiener classes in the Fourier setting, that is, when $\cal D$ is the Fourier basis ${\cal F}$.   When considering the specific case of Fourier basis,  our results, which are restricted to  approximation in Hilbert spaces, are valid for $L_2$.    Several results in the literature consider the approximation of Wiener classes in $L_q$
 when the basis is the Fourier basis ${\cal F}$.
For example,  the case  $\alpha>1/2$, $\beta=0$,  and $p=1$ has been analyzed in \cite{JUV} and   upper bounds for the error in 
$L_\infty$, have been obtained  for  the  Wiener spaces with ${\cal D}$ being  the $d$-dimensional Fourier basis  ${\cal F}^d$, see Lemma 4.3(i). Recently, these results have been improved in \cite {M}, 
where matching up to logarithm lower and upper bounds for multidimensional Wiener spaces  with ${\cal D}={\cal F}^d$ 
are given for the case $\beta=0$, $\alpha>0$, $0<p\leq 1$, see Corollary 4.3 in \cite{M}, and 
$\beta=0$, $\alpha>1-1/p$, $1<p\leq q$,  all when  the error is measured in $L_q$, $2\leq q\leq \infty$, see Theorem  4.5 in \cite{M}. In 
particular, when  the dimension $d=1$ and the error is measured in the Hilbert space norm
(i.e. $q=2$), 
the results from \cite{M} give the rate  
$\sigma_n(U_p({\bf w},{\cal F}))_{L_2}\asymp n^{-(\alpha+1/p-1/2)}$, provided
 $$
 \alpha>0, \quad 0<p\leq 1, \quad \hbox{or}\quad \alpha>1-1/p, \quad 1<p\leq 2.
 $$
 This latter result is a special case of our analysis.
 

\section {Best $n$ term approximation  for $U(\ell_p({\bf w}))$} 

In going forward, we assume $H=\ell_2$ with its canonical basis $e_j$, $j\in\mathbb N$.  Before presenting our main theorem, let us introduce the 
decreasing rearrangement 
$$
{\bf x^*}=( x_j^*)_{j\in\mathbb N}
$$
 of  the absolute values of the coordinates $x_j$ of a sequence ${\bf x}=(x_j)_{j\in\mathbb N}$ that is an element of the  sequence space 
${\bf c_0}$ (consisting of all sequences whose elements converge to $0$).  Namely,  we have that $x_1^*$ is the largest of the numbers $|x_j|$, $j\in\mathbb N$,  then  $x_2^*$ is the next largest, and so on.  It follows that 
$$
x_1^*\geq x_2^*\geq \ldots,
$$
and $\|{\bf x}\|_{\ell_p}= \|{\bf x}^*\|_{\ell_p}$ for all $0<p\le\infty$.  
For each $n\ge 1$ and ${\bf x}\in\ell_2$, we have that 
\begin{equation}
\sigma_n({\bf x})=\left [\sum_{j>n} [x_j^*]^2\right]^{1/2},
\end{equation}
where $\sigma_n({\bf x})$ is the error of $n$-term approximation of ${\bf x}$ in the $\ell_2$ norm.  
In order to prove our main result, we will need the following lemma.
\begin{lemma} 
\label{L:rearrange}
If  ${\bf x}\in \ell_p({\bf w})$, then ${\bf x}^*\in \ell_p({\bf w})$ , $\sigma_n({\bf x})=\sigma_n({\bf x}^*)$, and
\begin{equation}
\|{\bf x}^*\|_{\ell_p({\bf w})} \le  \|{\bf x}\|_{\ell_p({\bf w})} .
\end{equation}
 \end{lemma}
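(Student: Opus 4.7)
The identity $\sigma_n({\bf x})=\sigma_n({\bf x}^*)$ is immediate from the formula $\sigma_n({\bf x})=[\sum_{j>n}(x_j^*)^2]^{1/2}$, since the right-hand side depends only on the decreasing rearrangement of $|{\bf x}|$ and $({\bf x}^*)^*={\bf x}^*$. Moreover, the claim ${\bf x}^*\in\ell_p({\bf w})$ is an automatic consequence of the norm inequality. Thus the real content of the lemma is
\begin{equation*}
\|{\bf x}^*\|_{\ell_p({\bf w})}\le\|{\bf x}\|_{\ell_p({\bf w})},
\end{equation*}
which is what I focus on.

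My approach is to reduce to a finite initial sum and invoke the classical Hardy--Littlewood--P\'olya rearrangement inequality, exploiting the monotonicity of~${\bf w}$. Fix $N\in\mathbb N$ and pick indices $a_1<a_2<\dots<a_N$ so that the multisets $\{|x_{a_j}|\}_{j=1}^N$ and $\{x_j^*\}_{j=1}^N$ coincide; such indices exist because ${\bf x}\in\ell_2\subset{\bf c_0}$, so only finitely many coordinates exceed any positive threshold. The key elementary observation is that $a_j\ge j$ for every $j\le N$, since $a_1<\dots<a_j$ are $j$ distinct positive integers; by monotonicity of~${\bf w}$ this yields $w_j\le w_{a_j}$.

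For $0<p<\infty$, the sequence $(w_{a_j}^p)_{j=1}^N$ is non-decreasing. The rearrangement inequality says that pairing an increasing sequence against the decreasing rearrangement of a multiset minimizes the dot product over all pairings. Applying this to $(w_{a_j}^p)_{j=1}^N$ and the multiset $\{|x_{a_j}|^p\}_{j=1}^N$ (whose decreasing rearrangement is $((x_j^*)^p)_{j=1}^N$), and combining with $w_j\le w_{a_j}$, I obtain
\begin{equation*}
\sum_{j=1}^N w_j^p(x_j^*)^p \le \sum_{j=1}^N w_{a_j}^p(x_j^*)^p \le \sum_{j=1}^N w_{a_j}^p|x_{a_j}|^p \le \|{\bf x}\|_{\ell_p({\bf w})}^p.
\end{equation*}
Letting $N\to\infty$ settles the case $0<p<\infty$. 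The case $p=\infty$ is even more direct: fixing $j$ and taking $N=j$, the value $|x_{a_j}|$ lies among the top $j$ coordinates of $|{\bf x}|$, so $x_j^*\le|x_{a_j}|$; hence $w_jx_j^*\le w_{a_j}|x_{a_j}|\le\|{\bf x}\|_{\ell_\infty({\bf w})}$, and taking the supremum over $j$ concludes.

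The only (mild) obstacle is orienting the rearrangement inequality correctly: because the weights $w_{a_j}^p$ are increasing and are paired against the decreasing sequence $(x_j^*)^p$, we are in the \emph{minimum}-pairing configuration, so the natural pairing $w_{a_j}^p|x_{a_j}|^p$, being another pairing of the same multiset, can only be larger. Everything else is bookkeeping.
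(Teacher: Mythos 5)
Your proof is correct, and it reaches the key inequality by a different mechanism than the paper. The paper proceeds by an iterative ``bubble-sort'' argument: it builds a chain of sequences ${\bf y}^{(0)}={\bf x}, {\bf y}^{(1)},\dots$, where each ${\bf y}^{(m+1)}$ is obtained from ${\bf y}^{(m)}$ by a single transposition that moves the largest remaining entry into position $m+1$; since each such swap moves a larger value to a position with a smaller weight, it cannot increase the $\ell_p({\bf w})$-norm, and the bound $\sum_{j=1}^m [w_j x_j^*]^p\le\|{\bf x}\|_{\ell_p({\bf w})}^p$ follows by comparing partial sums and letting $m\to\infty$. You instead extract, for each $N$, the indices $a_1<\dots<a_N$ carrying the $N$ largest entries, split the inequality into two clean steps --- the index compression $w_j\le w_{a_j}$ (valid since $a_j\ge j$) and the Hardy--Littlewood--P\'olya rearrangement inequality on the finite block --- and then let $N\to\infty$. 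The two arguments exploit exactly the same monotonicity, and indeed the paper's swapping chain is essentially a self-contained proof of the rearrangement inequality; your version is shorter at the cost of citing HLP as a black box, and it avoids having to argue that the maximum over the tail $k>m$ is attained. A genuine advantage of your write-up is that you treat $p=\infty$ explicitly (via $w_jx_j^*\le w_{a_j}|x_{a_j}|\le\|{\bf x}\|_{\ell_\infty({\bf w})}$), whereas the paper's displayed argument is phrased only with $p$-th power sums. You correctly orient the rearrangement inequality (increasing weights against the decreasing rearrangement is the minimizing pairing), so there is no gap.
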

\begin{proof}  
It follows directly from the definitions of $\sigma_n$  and ${\bf x}^*$ that  $\sigma_n({\bf x})=\sigma_n({\bf x}^*)$.
Let the sequence  ${\bf x}=(x_1,x_2,\dots)$ be in $\ell_p({\bf w})$.  We can assume that
all $x_j$ are non-negative since changing the signs of its entries does not effect neither its rearrangement nor its $\ell_p({\bf w})$ norm.
We shall first construct sequences ${\bf y}^{(m)}=(y_1^{(m)},y_2^{(m)},\dots)$, $m=0,1,\dots$,  such that each ${\bf y}^{(m+1)}$ is gotten by swapping the positions of two of the entries in  ${\bf y}^{(m)}$ with the indices of these entries each larger than $m$.  Also, the first $m$ entries of ${\bf y}^{(m)}$ satisfy
$$
y_j^{(m)}=x_j^*, \quad j=1,\dots,m.
$$
Indeed, we start with  ${\bf y}^{0}={\bf x}$.  Assuming that ${\bf y}^{(m)}$ has been defined,  we let $j>m$ be the  smallest index larger than $m$ 
such that $y^{(m)}_j$  is the largest of the entries $y^{(m)}_k$, $k>m$.  We swap the entries with positions $m+1$ and $j$ to create the sequence ${\bf y}^{(m+1)}$ from ${\bf y}^{(m)}$.   
We  have for  $m=0,1,2,\ldots,$
$$
\|{\bf y}^{(m+1)}\|_{\ell_p({\bf w})} \le \|{\bf y}^{(m)}\|_{\ell_p({\bf w})} \leq \ldots\leq  
\|{\bf y}^{(0)}\|_{\ell_p({\bf w})} 
=\|{\bf x}\|_{\ell_p({\bf w})},  
$$
because the weights in ${\bf w}$ are non-decreasing.   
Note that for every $m$
$$
\sum_{j=1}^m[w_jx^*_j]^p=\sum_{j=1}^m[w_jy^{(m)}_j]^p\leq \|{\bf y}^{(m)}\|^p_{\ell_p({\bf w})}\leq  \|{\bf x}\|^p_{\ell_p({\bf w})}, 
$$
and therefore 
$$
\|{\bf x}^*\|_{\ell_p({\bf w})}\leq  \|{\bf x}\|_{\ell_p({\bf w})},
$$
 which  completes the proof.
 \end{proof}

Now we are ready to determine  the rate of $\sigma_n(U(\ell_p({\bf w}))$ for all $n$ and all weight sequences ${\bf w}$. We fix ${\bf w}$, $n$, and $0<p<\infty$.  
 From the sequence ${\bf w}$, we define  the numbers
    \begin{equation}
  W_m:=[w^p_1+w^p_2+\ldots+w^p_m]^{1/p},\quad m\ge 1.
  \end{equation}
  Then the following theorem holds
  
  \begin{theorem}
\label{L:optimal}
For any $0<p<\infty$ and ${\bf w}$, we have
\begin{equation}
\label{Topt}
 \max_{m\geq n}\,(m-n)[W_m]^{-2} \le \sigma_n^2(U(\ell_p({\bf w})))\le  \max_{m\ge n}\,(m-n+1)[W_m]^{-2},
\end{equation}
and for $p=\infty$ we have that 
$$
\sigma_n^2(U(\ell_\infty({\bf w})))\asymp \sum_{j=n+1}^\infty w_j^{-2}.
$$
\end{theorem}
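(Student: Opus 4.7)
The plan is to reduce to non-increasing non-negative sequences by Lemma~\ref{L:rearrange}, then treat the lower and upper bounds for $p<\infty$ separately, and finally dispose of the (simpler) $p=\infty$ case.

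\textbf{Step 1 (reduction).} For any ${\bf x}\in U(\ell_p({\bf w}))$, Lemma~\ref{L:rearrange} gives $\sigma_n({\bf x})=\sigma_n({\bf x}^*)$ together with $\|{\bf x}^*\|_{\ell_p({\bf w})}\le\|{\bf x}\|_{\ell_p({\bf w})}$. Thus in the supremum defining $\sigma_n^2(U(\ell_p({\bf w})))$ it suffices to consider non-negative, non-increasing ${\bf x}$.

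\textbf{Step 2 (lower bound).} For each $m\ge n$ I produce an explicit near-extremiser. Set $x^{(m)}_j:=W_m^{-1}$ for $1\le j\le m$ and $x^{(m)}_j:=0$ otherwise. Then
$$
\|{\bf x}^{(m)}\|_{\ell_p({\bf w})}^p=W_m^{-p}\sum_{j=1}^m w_j^p=1,
$$
so ${\bf x}^{(m)}\in U(\ell_p({\bf w}))$, and $\sigma_n^2({\bf x}^{(m)})=(m-n)W_m^{-2}$. Taking the supremum over $m\ge n$ gives the left inequality in \eqref{Topt}.

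\textbf{Step 3 (upper bound, main work).} The workhorse is a pointwise consequence of monotonicity: for any non-increasing non-negative ${\bf x}$ with $\|{\bf x}\|_{\ell_p({\bf w})}\le 1$ and every $m\ge 1$,
$$
x_m^p W_m^p=x_m^p\sum_{j=1}^m w_j^p\le\sum_{j=1}^m w_j^p x_j^p\le 1,\qquad\text{hence}\qquad x_m\le W_m^{-1}.
$$
Given ${\bf x}$ I would then pick a splitting index $m^*\ge n$ adapted to ${\bf x}$ (roughly, the largest $m$ for which the pointwise bound $x_m\le W_m^{-1}$ still matches $x_{n+1}$ up to decreasingness, or equivalently such that $x_{n+1}\le W_{m^*}^{-1}$ but $x_{n+1}>W_{m^*+1}^{-1}$) and split
$$
\sigma_n^2({\bf x})=\sum_{j=n+1}^{m^*}x_j^2+\sum_{j>m^*}x_j^2.
$$
On the head $j\in(n,m^*]$ one uses decreasingness $x_j\le x_{n+1}\le W_{m^*}^{-1}$ to get $\sum_{n<j\le m^*}x_j^2\le(m^*-n)W_{m^*}^{-2}$; on the tail $j>m^*$ one uses $x_j\le W_j^{-1}$ together with the defining inequality $W_{m^*+1}^{-1}<x_{n+1}$ (and the growth of $W_j$) to absorb $\sum_{j>m^*}x_j^2$ into a single further term $W_{m^*}^{-2}$. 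Summing gives
$$
\sigma_n^2({\bf x})\le(m^*-n)W_{m^*}^{-2}+W_{m^*}^{-2}=(m^*-n+1)W_{m^*}^{-2}\le\max_{m\ge n}(m-n+1)W_m^{-2}.
$$

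\textbf{Step 4 ($p=\infty$).} By definition $|x_j|\le w_j^{-1}$, so after rearrangement $x_j^*\le w_j^{-1}$ and $\sigma_n^2({\bf x})\le\sum_{j>n}w_j^{-2}$. Conversely, $x_j:=w_j^{-1}$ is non-increasing (since ${\bf w}$ is non-decreasing), lies in $U(\ell_\infty({\bf w}))$, and realises $\sigma_n^2({\bf x})=\sum_{j>n}w_j^{-2}$.

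\textbf{Main obstacle.} The delicate part is Step~3: the naive tail bound $\sum_{j>m^*}x_j^2\le\sum_{j>m^*}W_j^{-2}$ can diverge even when $\max_{m\ge n}(m-n+1)W_m^{-2}$ is finite, so the argument must exploit the definition of the splitting index $m^*$ (using that $x_{n+1}>W_{m^*+1}^{-1}$) to bound the tail by a single $W_{m^*}^{-2}$ rather than by a sum. Getting precisely the ``$+1$'' on the right-hand side of \eqref{Topt} out of this telescoping is the subtle point.
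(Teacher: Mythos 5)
Your Steps 1, 2 and 4 are fine and coincide with what the paper does: the reduction via Lemma~\ref{L:rearrange}, the sequences $(W_m^{-1},\dots,W_m^{-1},0,\dots)$ for the lower bound, and the pointwise bound $|x_j|\le w_j^{-1}$ for $p=\infty$. The problem is Step 3. The key intermediate claim there --- that with $m^*$ defined by $x_{n+1}\le W_{m^*}^{-1}<\,$(i.e.\ $x_{n+1}>W_{m^*+1}^{-1}$) one can bound the tail by $\sum_{j>m^*}x_j^2\le W_{m^*}^{-2}$ --- is false. Take $p=1$, $w_j\equiv 1$ (so $W_m=m$), $n=1$, and for a large integer $M$ let
$$
x_1=x_2=\frac{1}{M+\tfrac12},\qquad x_3=\cdots=x_{2M-6}=\frac{1}{2M},\qquad x_j=0 \ \ (j>2M-6).
$$
This sequence is non-negative, non-increasing, and lies in $U(\ell_1)$, and your splitting index is $m^*=M$ (since $x_2\le 1/M$ but $x_2>1/(M+1)$). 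Yet
$$
\sum_{j>m^*}x_j^2=(M-6)\cdot\frac{1}{4M^2}\approx\frac{1}{4M}\ \gg\ W_{m^*}^{-2}=\frac{1}{M^2}.
$$
The inequality $x_{n+1}>W_{m^*+1}^{-1}$ is a \emph{lower} bound on $x_{n+1}$; it limits the $\ell_p({\bf w})$-budget left for the tail, but not nearly enough to compress the tail into a single term $W_{m^*}^{-2}$. (In this example the final bound $(m^*-n+1)W_{m^*}^{-2}$ happens to survive only because your head estimate is simultaneously very lossy; the two errors do not cancel in general, and in any case the derivation as written is broken.) So the "main obstacle" you flag is real, but the proposed resolution does not overcome it: pointwise bounds $x_j\le W_j^{-1}$ plus a single threshold index cannot yield the clean $(m-n+1)W_m^{-2}$ bound.

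The paper avoids this entirely by a different reduction. After rearranging, it replaces the first $n$ entries of ${\bf x}^*$ by $x_n^*$ (which only decreases the $\ell_p({\bf w})$-norm and leaves $\sigma_n$ unchanged) and then argues that, among admissible tails dominated by $b=x_n^*$, the quantity $\sum_{j>n}y_j^2$ is maximized by a sequence of the form $(b,\dots,b,c,0,0,\dots)$ that is constant on an initial segment of length $m$: since the weights are non-decreasing, each unit of the budget $\sum_j w_j^p y_j^p\le 1$ buys the largest return $y_j^2$ at the earliest available coordinate. For such a flat sequence the constraint forces $b\le W_m^{-1}$ and one reads off $\sigma_n^2\le (m-n)b^2+c^2<(m-n+1)W_m^{-2}$ directly. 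If you want to salvage your own route, you need some such global exchange/majorization argument that uses the budget constraint across all coordinates at once, not a head--tail split at a single index determined by $x_{n+1}$.
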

\begin{proof} 
We fix, $n,p,{\bf w}$.  For every sequence ${\bf x}\in U(\ell_p({\bf w}))$,  we consider its decreasing rearrangement ${\bf x}^*$, 
which according to Lemma \ref{L:rearrange} is also an element of the unit ball and has the same $n$-term approximation. We next construct a new sequence $\tilde {\bf x}$ from ${\bf x}^*$  
by making its first $n$  entries equal to $x^*_n$ and not touching the rest of the sequence, that is,
$$
\tilde x_j=\begin{cases}
x^*_n, \quad j=1,\ldots,n,\\
x^*_j, \quad j>n.
\end{cases}
$$
Note that because the sequence ${\bf x}^*$ is nonincreasing and the weights are nondecreasing, we have
$$
\|{\bf \tilde x}\|_{\ell_p({\bf w})}\le \|{\bf x}^*\|_{\ell_p({\bf w})}\le 1, \quad \hbox{and}\quad \sigma_n({\bf \tilde x})=\sigma_n({\bf x}^*)=\sigma_n({\bf x}).
$$ 
Let us denote by $b:=x^*_n$ and notice that  $\sum_{j=1}^n w_j^p\tilde x_j^p= W^p_n b^p$.
We have
$$
\sigma_n({\bf x})=\sigma_n({\bf \tilde x})=\sum_{j=n+1}^\infty [\tilde x_j]^2=\sum_{j=n+1}^\infty [x^*_j]^2.
$$
We  are now interested in how to change the  tail of $\tilde {\bf x}$, that is,  how to change the $x^*_j$'s with $j>n$ to new quantities $y_j$, $j>n$ so  that we  maximize 
the $n$-term approximation $\sigma^2_n({\bf y})=\sum_{j=n+1}^\infty [y_j]^2$, 
under  the restrictions $b=x^*_n\ge y_{n+1}\ge \cdots$ and 
\begin{equation} 
\sum_{j=n+1}^\infty w_j^py_j^p \leq 1-W^p_nb^p=:S.
\end{equation}
Notice that an investment of $w_j^py_j^p$ towards $S$ gives a return $y_j^2$ at coordinate $j$.  Since the $w_j$  are non-decreasing, to maximize $\sigma_n({\bf y})$,
it is best to invest as much as we can for $j=n+1$, $j=n+2$,  and so on.  So, the sequence which will maximize
$\sigma^2_n({\bf y})$ has to have  $y_j=b$, $j=n+1,\dots$ until we have used up our capital $S$. In other words, given that our sequence ${\bf y}$ 
has first $n$ coordinates $b$,  then to maximize $\sigma_n({\bf y})$ we should  take ${\bf y}=(b,b,\dots,b,c,0,0,\ldots)\in U(\ell_p({\bf w}))$,  where $0\leq c<b$. The membership in the unit ball of $\ell_p({\bf w})$ requires that 
$$
b^p\sum_{j=1}^mw_j^p+c^pw_{m+1}^p\leq 1\quad \Rightarrow\quad b^p[W_m]^p\leq 1
$$
and 
$$
\sigma^2_n({\bf y})=(m-n)b^2+c^2< (m-n+1)b^2\leq (m-n+1)[W_m]^{-2}.
$$
 Therefore, we have
$$
 \sigma^2_n(U(\ell_p({\bf w})))\leq \sup_{m\geq n}(m-n+1)W_m^{-2}.
$$
Next, consider the special sequence ${\bf s}^{(m)}$ with entries ${\bf s}_j^{(m)}$ given by
 $$
 {\bf s}_j^{(m)}=
 \begin{cases}
 W_m^{-1}, \quad j=1, \ldots,m,\\
 0, \,\quad \quad j>m.
 \end{cases}
 $$
 Clearly ${\bf s}^{(m)}\in U(\ell_p({\bf w}))$ for all $m$ and
 $$
 \sigma^2_n({\bf s}^{(m)})=(m-n)[W_m]^{-2}, \quad m>n.
 $$
  Such sequences provides the lower bound in the case $1<p<\infty$, and thus (\ref{Topt}) is proven.
  
  In the case $p=\infty$, we have that the entries $x_j$ of a sequence ${\bf x}\in \ell_\infty({\bf w})$ satisfy
  $$
  |x_j|\leq \|{\bf x}\|_{\ell_\infty({\bf w})} w_j^{-1}\quad \Rightarrow\quad \sigma^2_n({\bf x})\leq \left [\sum_{j=n+1}^\infty w_j^{-2}\right ]\|{\bf x}\|_{\ell_\infty({\bf w})}.
  $$
  On the other hand, for  the sequence 
  $$
  {\bf w}^{-1}:=(w_1^{-1}, w_2^{-1}, \ldots)\in U(\ell_\infty({\bf w})),
  $$
   we have that  $ \sigma^2_n({\bf w}^{-1})=\sum_{j=n+1}^\infty w_j^{-2}$, and the proof is completed.
  \end{proof}

\section{Special cases of sequence spaces}
In this section, we discuss several special cases of sequences ${\bf w}$ that are used in the definition of the classical Wiener spaces, see \cite{M,NNS} and the references therein.  
\begin{cor}
\label{C2}
Consider the classes $\ell_p({\bf w})$ with $w_j:= (1+\log j)^{\beta}$,  $\beta\geq 0$, $0<p<2$.   Then we have
$$
\sigma_n(U(\ell_p({\bf w}))) \asymp m^{-(1/p-1/2)}[\log m]^{-\beta}.
$$
\end{cor}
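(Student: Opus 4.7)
The plan is to specialize Theorem \ref{L:optimal} to the weights $w_j = (1+\log j)^{\beta}$ and estimate the resulting $\max_{m\ge n}$-expression. The whole argument reduces to (a) a sharp two-sided estimate for $W_m$ and (b) optimizing a function of $m$ that turns out to peak at $m \asymp n$.

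First I would estimate
\[
W_m^p \;=\; \sum_{j=1}^m (1+\log j)^{\beta p}.
\]
Since $\beta\ge 0$, the summand is nondecreasing in $j$, so comparing with the largest term gives the upper bound $W_m^p \le m(1+\log m)^{\beta p}$, and restricting the sum to $j \in [m/2,m]$ gives a matching lower bound $W_m^p \gtrsim m(1+\log m)^{\beta p}$. Hence
\[
W_m \;\asymp\; m^{1/p}(1+\log m)^{\beta}, \qquad m\ge 2.
\]

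Next, by Theorem \ref{L:optimal},
\[
\sigma_n^2(U(\ell_p({\bf w}))) \;\asymp\; \max_{m\ge n}\,\frac{m-n+1}{W_m^{\,2}} \;\asymp\; \max_{m\ge n}\,\frac{m-n+1}{m^{2/p}(1+\log m)^{2\beta}}.
\]
Here I would choose the test value $m^* := \lceil 2n/(2-p)\rceil$, which makes $m^*-n\asymp n$ and $m^*\asymp n$; plugging this in yields
\[
\frac{m^*-n+1}{(m^*)^{2/p}(1+\log m^*)^{2\beta}} \;\asymp\; \frac{n}{n^{2/p}(\log n)^{2\beta}} \;=\; \frac{n^{1-2/p}}{(\log n)^{2\beta}},
\]
which gives the lower bound for $\sigma_n^2(U(\ell_p({\bf w})))$. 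For the upper bound I would write $F(m) := (m-n+1)/[m^{2/p}(1+\log m)^{2\beta}]$ and argue (either by monotonicity analysis of $(m-n)/m^{2/p}$, whose maximum on $[n,\infty)$ lies at $m=2n/(2-p)$, or by splitting $m\le Cn$ versus $m>Cn$) that $F(m) \lesssim n^{1-2/p}(\log n)^{-2\beta}$ uniformly in $m\ge n$; the logarithm changes the optimizer only by a multiplicative constant since $\log m \asymp \log n$ throughout the relevant range. Taking square roots produces
\[
\sigma_n(U(\ell_p({\bf w}))) \;\asymp\; n^{-(1/p-1/2)}(\log n)^{-\beta},\qquad n>1.
\]

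The step I expect to require the most care is the upper bound analysis of $F(m)$ for $m$ much larger than $n$: one has to verify that even though $\log m$ can be much larger than $\log n$, the polynomial factor $m^{2/p-1}$ with $2/p>1$ dominates, so the supremum is achieved on the region $m \asymp n$ where the logarithmic factors are comparable to $\log n$. Once this is observed, the proof is just bookkeeping.
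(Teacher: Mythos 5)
Your proposal is correct and follows essentially the same route as the paper: establish $W_m\asymp m^{1/p}(1+\log m)^{\beta}$ by comparison with the largest term and with the half-range $j\in[m/2,m]$, then substitute into Theorem \ref{L:optimal}. The only difference is that you carry out the optimization over $m$ explicitly (test value $m^*\asymp n$ for the lower bound, and the observation that $(m-n+1)m^{-2/p}(1+\log m)^{-2\beta}\le m^{1-2/p}(1+\log m)^{-2\beta}$ is decreasing since $p<2$ and $\beta\ge 0$ for the upper bound), a step the paper leaves implicit.
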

\begin{proof} Let us start by calculating the $W_m$, 
 $$
 W^p_m=\sum_{j=1}^mw_j^{p}=\sum_{j=1}^m (1+\log j)^{\beta p}\geq \frac{m}{2}(1+\log(m/2))^{\beta p}.
 $$
  But we also have $W_m^p \leq m(1+\log m)^{\beta p}
$, so for $m$ sufficiently big  we get 
\begin{equation}
\label{BB}
 W_m \asymp m^{1/p}[\log m]^{\beta }.
\end{equation}
Using this in Theorem \ref {L:optimal}  gives that 
$$
\sigma_n(U(\ell_p({\bf w})))\asymp n^{1/2-1/p}[\log n]^{-\beta},
$$
provided $0<p<2$.   
\end{proof}

\begin{cor}
\label{C3}
Consider the classes $\ell_p({\bf w})$, $0<p\leq \infty$, with   
$$w_j:= \max_{1\le i\le j} \,i^{\alpha}\log(i+1)^\beta, \quad j\in \mathbb N.$$
 with $\alpha>0$, $\beta\in \mathbb R$. Then we have
$$
 \sigma_n(U(\ell_p({\bf w}))) 
 \asymp n^{-(\alpha+1/p-1/2)}[\log (n+1)]^{-\beta}, 
$$
provided $ \alpha+1/p-1/2>0$.
\end{cor}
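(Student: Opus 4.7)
The plan is to reduce Corollary \ref{C3} to Theorem \ref{L:optimal} by obtaining sharp two-sided estimates for the partial sums $W_m$, and then to carry out the optimization over $m\ge n$ appearing in \eqref{Topt}. The $\max$ in the definition of $w_j$ is present only to guarantee monotonicity, so the first task is to confirm that it does not alter the order.

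\textbf{Step 1 (asymptotic form of the weights).} I would first show that
$$
w_j \asymp j^{\alpha}[\log(j+1)]^{\beta}, \qquad j\ge 1,
$$
with absolute constants depending only on $\alpha,\beta$. If $\beta\ge 0$ the function $i\mapsto i^{\alpha}[\log(i+1)]^{\beta}$ is increasing on $\mathbb N$ and the max equals its value at $i=j$. If $\beta<0$, since $\alpha>0$ a short derivative computation shows the function is eventually monotone increasing, so $w_j=j^{\alpha}[\log(j+1)]^{\beta}$ for $j\ge j_0(\alpha,\beta)$; for the finitely many small $j$ both sides are bounded between positive constants.

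\textbf{Step 2 (estimate of $W_m$ for $p<\infty$).} Using Step 1 and integral comparison,
$$
W_m^p=\sum_{j=1}^m w_j^p \asymp \sum_{j=1}^m j^{\alpha p}[\log(j+1)]^{\beta p}\asymp \int_1^m x^{\alpha p}[\log(x+1)]^{\beta p}\,dx \asymp m^{\alpha p+1}[\log(m+1)]^{\beta p},
$$
since $\alpha p>-1$ and $[\log x]^{\beta p}$ is slowly varying. Consequently $W_m\asymp m^{\alpha+1/p}[\log(m+1)]^{\beta}$.

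\textbf{Step 3 (applying Theorem \ref{L:optimal}, case $p<\infty$).} Set $\gamma:=2\alpha+2/p$, so the hypothesis $\alpha+1/p-1/2>0$ reads $\gamma>1$. Then
$$
(m-n+1)W_m^{-2}\asymp (m-n+1)\,m^{-\gamma}[\log(m+1)]^{-2\beta},\qquad m\ge n.
$$
Ignoring the slowly varying log factor, $(m-n+1)m^{-\gamma}$ is maximized at $m\asymp n$ (where the derivative vanishes at $m=\gamma(n-1)/(\gamma-1)$), with maximum value $\asymp n^{1-\gamma}$. The log factor is essentially constant over the range of $m$'s producing this near-maximum, so
$$
\max_{m\ge n}(m-n+1)W_m^{-2} \asymp n^{1-\gamma}[\log(n+1)]^{-2\beta}.
$$
This yields the upper bound in \eqref{Topt}. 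For the matching lower bound I would just evaluate $(m-n)W_m^{-2}$ at $m=2n$ to get $\asymp n\cdot n^{-\gamma}[\log n]^{-2\beta}=n^{1-\gamma}[\log n]^{-2\beta}$. Taking square roots produces $\sigma_n(U(\ell_p(\mathbf w)))\asymp n^{-(\alpha+1/p-1/2)}[\log(n+1)]^{-\beta}$ as claimed.

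\textbf{Step 4 (case $p=\infty$).} Here Theorem \ref{L:optimal} gives $\sigma_n^2\asymp\sum_{j>n}w_j^{-2}\asymp\sum_{j>n}j^{-2\alpha}[\log(j+1)]^{-2\beta}$; since $\alpha>1/2$ the tail converges and integral comparison gives $\asymp n^{1-2\alpha}[\log(n+1)]^{-2\beta}$, which matches the stated rate with $1/p=0$.

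The only non-mechanical point is Step 1 when $\beta<0$, where one must argue that the $\max$ collapses to the endpoint for $j$ large; everything else is routine integral/summation comparisons plus the elementary optimization in Step 3.
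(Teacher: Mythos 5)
Your proposal is correct and follows essentially the same route as the paper: estimate $W_m\asymp m^{\alpha+1/p}[\log(m+1)]^{\beta}$ (the paper likewise uses the eventual monotonicity of $x^{\delta}[\log(x+1)]^{\gamma}$ to dispose of the $\max$ and to sum the series) and then feed this into Theorem \ref{L:optimal}, treating $p=\infty$ separately via the tail sum. You simply make explicit two steps the paper leaves implicit --- the collapse of the $\max$ for $\beta<0$ and the optimization of $(m-n+1)W_m^{-2}$ over $m\ge n$ under $\gamma=2\alpha+2/p>1$ --- and both are carried out correctly.
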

\begin{proof}  

\noindent
{\bf Case $0<p<\infty$:}    Let us observe that for any $\delta>0$ and $\gamma\in \mathbb R$, the function $\varphi(x):=x^\delta\log(x+1)^\gamma$,  is an increasing function  on $[c,\infty)$, if  $c=c(\delta,\gamma)$ is sufficiently large.  Therefore, we have that 
$$
 W_m^p=\sum_{j=1}^m j^{\alpha p}\log(j+1)^{\beta p}\asymp   m^{\alpha p+1}\log(m+1)^{\beta p},\quad m\ge M,
$$
provided $M$ is sufficiently large.  This gives
\begin{equation}
\label{BB1}
 W_m\asymp m^{\alpha +1/p}\log(m+1)^{\beta },\quad m\ge M.
\end{equation}
 Theorem \ref{L:optimal}  now gives that 
$$
\sigma_n(U(\ell_p({\bf w}))) 
 \asymp n^{1/2-\alpha-1/p}[\log (n+1)]^{-\beta},
 $$
 for $n$ sufficiently large as desired.
 
\noindent
{\bf Case $p=\infty$:}  In this case we have the restriction that $2\alpha>1$. According to Theorem \ref{L:optimal}, we have that 
$$
\sigma^2_n(U(\ell_\infty({\bf w})))\asymp \sum_{j=n+1}^\infty j^{-2\alpha}[\log(j+1)]^{-2\beta}\asymp n^{-2\alpha+1}\log(n+1)^{-2\beta}.
$$
\end{proof}

\begin{remark}
\label{R1}
Note that when $\beta=0$, the ranges of $\alpha$ and $p$ in Corollary \ref{C3} are
$$
\alpha>0, \quad 0<p\leq 2\quad \hbox{or}\quad  \alpha>1/2-1/p>0, \quad 2\leq p\leq \infty.
$$
\end{remark}

{\bf Acknowledgment:} This work  was supported by the  NSF Grant  DMS 2134077  and the ONR Contract N00014-20-1-278. 
We would like to thank   V. K. Nguyen for pointing out to us several valuable references.

\end{document}